\newtheorem{theorem}{Theorem}[section]
\newtheorem{lemma}[theorem]{Lemma}
\theoremstyle{definition}
\theoremstyle{remark}
\numberwithin{equation}{section}
\begin{document}
\setcounter{page}{1}

\title[Jones Index Theorem revisited]{Jones Index Theorem revisited}

\author[Glubokov]{Andrey ~Yu. ~Glubokov$^1$}

\author[Nikolaev]{Igor ~V. ~Nikolaev $^2$}

\date{\today}

\address{$^{1}$ Department of Mathematics,  Purdue University, 
150 N. University Street, West Lafayette, IN 47907-2067,  United States.}
\email{\textcolor[rgb]{0.00,0.00,0.84}{agluboko@purdue.edu}}

\address{$^{2}$ Department of Mathematics and Computer Science, St.~John's University, 8000 Utopia Parkway,  
New York,  NY 11439, United States.}
\email{\textcolor[rgb]{0.00,0.00,0.84}{igor.v.nikolaev@gmail.com}}


\subjclass[2010]{Primary 46L37; Secondary 13F60.}

\keywords{subfactor, cluster $C^*$-algebra.}


\begin{abstract}
We prove the Jones Index Theorem using
the K-theory of a cluster $C^*$-algebra 
of the Riemann sphere with two boundary components.
\end{abstract}

\maketitle

\section{Introduction}
The Jones Index Theorem is an analog of the Galois theory for the von Neumann algebras
 [Jones 1991] \cite{J1}. 
Recall that the factor is a von Neumann algebra $\mathscr{M}$ with the trivial center. 
A subfactor $\mathscr{N}$ of the factor $\mathscr{M}$ is a subalgebra,  such  that  $\mathscr{N}$
is a factor. 
The index $[\mathscr{M}:\mathscr{N}]$ of a subfactor $\mathscr{N}$ of a type II factor $\mathscr{M}$ 
is  a positive real number $\dim_{\mathscr{N}}(L^2(\mathscr{M}))$,  where $L^2(\mathscr{M})$
is a representation of $\mathscr{N}$ obtained
from  the canonical trace on $\mathscr{M}$  using the Gelfand-Naimark-Segal (GNS) construction.
We refer the reader to  [Jones 1991] \cite[Section 2.5]{J1} for the missing definitions and details.  
The  Jones Index Theorem  says that such subfactors exist only if:
\begin{equation}\label{eq1.1}
[\mathscr{M}:\mathscr{N}]  ~\in  ~[4, \infty) ~\bigcup  ~\{ 4\cos^2\left({\pi\over n}\right) ~|~n\ge 3\}.   
\end{equation}

\bigskip
The cluster algebra  $\mathscr{A}(\mathbf{x}, B)$ of rank $n$ 
is a subring of the field  of  rational functions in $n$ variables
depending  on a  cluster  of variables  $\mathbf{x}=(x_1,\dots, x_n)$
and a skew-symmetric matrix  $B=(b_{ij})\in M_n(\mathbf{Z})$ [Fomin \& Zelevinsky 2002]  \cite{FoZe1}.
The pair  $(\mathbf{x}, B)$ is called a  seed.
A new cluster $\mathbf{x}'=(x_1,\dots,x_k',\dots,  x_n)$ and a new
skew-symmetric matrix $B'=(b_{ij}')$ is obtained from 
$(\mathbf{x}, B)$ by the   exchange relations:
\begin{eqnarray}\label{eq1.2}
x_kx_k'  &=& \prod_{i=1}^n  x_i^{\max(b_{ik}, 0)} + \prod_{i=1}^n  x_i^{\max(-b_{ik}, 0)},\cr 
b_{ij}' &=& 
\begin{cases}
-b_{ij}  & \mbox{if}   ~i=k  ~\mbox{or}  ~j=k\cr
b_{ij}+{|b_{ik}|b_{kj}+b_{ik}|b_{kj}|\over 2}  & \mbox{otherwise.}
\end{cases}
\end{eqnarray}

The seed $(\mathbf{x}', B')$ is said to be a  mutation of $(\mathbf{x}, B)$ in direction $k$,
where $1\le k\le n$;   the algebra  $\mathscr{A}(\mathbf{x}, B)$ is  generated by cluster  variables $\{x_i\}_{i=1}^{\infty}$
obtained from the initial seed $(\mathbf{x}, B)$ by the iteration of mutations  in all possible
directions $k$.   The Laurent phenomenon  says  that  $\mathscr{A}(\mathbf{x}, B)\subset \mathbf{Z}[\mathbf{x}^{\pm 1}]$,
where  $\mathbf{Z}[\mathbf{x}^{\pm 1}]$ is the ring of  the Laurent polynomials in  variables $\mathbf{x}=(x_1,\dots,x_n)$
depending on an initial seed $(\mathbf{x}, B)$.
 The  $\mathscr{A}(\mathbf{x}, B)$  is a commutative algebra with an additive abelian
semigroup consisting of the Laurent polynomials with positive coefficients. 
In particular, it has  an order  satisfying the Riesz interpolation property,  so that  $\mathscr{A}(\mathbf{x}, B)$ becomes  a 
 dimension group  [Effros 1981]  \cite[Theorem 3.1]{E}. 
 Such groups are known to classify via $K$-theory the approximately finite-dimensional (AF) $C^*$-algebras,
 i.e. the direct limits of the matrix  $C^*$-algebras $M_{n_1}(\mathbf{C})\oplus\dots\oplus M_{n_k}(\mathbf{C})$
  [Effros 1981]  \cite{E}. 
 A  cluster $C^*$-algebra  $\mathbb{A}(\mathbf{x}, B)$  is   an  AF-algebra,  
such that $K_0(\mathbb{A}(\mathbf{x}, B))\cong  \mathscr{A}(\mathbf{x}, B)$,
where $\cong$ is an isomorphism of the dimension groups \cite[Section 4.4]{Nik1}.

\bigskip
An annulus in the complex plane will be denoted by 
\begin{equation}\label{eq1.3}
\mathscr{D}=\{z=x+iy\in\mathbf{C} ~|~ r\le |z|\le R\}. 
\end{equation}
Recall that the Riemann surfaces  $\mathscr{D}$ and $\mathscr{D}'$  are
conformally equivalent   if and only if     $R/r=R'/r':=t$.  By 
$T_\mathscr{D}=\{t\in \mathbf{R} ~|~ t>1\}$
we understand the Teichm\"uller space of the annulus $\mathscr{D}$. 
The  Penner coordinates  on $T_\mathscr{D}$ are encoded by 
the cluster algebra $\mathscr{A}(\mathbf{x}, B)$, where
\begin{equation}\label{eq1.4}
B=\left(
\begin{matrix}
0 & 2\cr -2 & 0
\end{matrix}
\right),   
\end{equation}
see [Fomin,  Shapiro  \& Thurston  2008]  \cite[Example 4.4]{FoShaThu1}
and [Williams 2014] \cite[Section 3]{Wil1}. 
The corresponding  cluster $C^*$-algebra  $\mathbb{A}(\mathscr{D})$ is given by 
the Bratteli diagram in Figure 1,  which shows the inclusions of the matrix algebras 
 $M_{n_1}(\mathbf{C})\oplus\dots\oplus M_{n_k}(\mathbf{C})$ in the AF-algebra
 $\mathbb{A}(\mathscr{D})$. 
The latter is known as a GICAR (Gauge Invariant Canonical Anticommutation Relations) 
algebra
 [Davidson 1996]  \cite[Example III.5.5]{D} and [Effros 1980]  \cite[p.13(e)]{E}. 
 Moreover, let $M_{2^{\infty}}:=\bigotimes_{i=1}^{\infty} M_2(\mathbf{C})$ be the uniformly hyperfinite (UHF)
 algebra, where $M_2(\mathbf{C})$ is a matrix algebra. Then 
 there exists an embedding of the AF-algebras: 
\begin{equation}\label{eq1.6}
\mathbb{A}(\mathscr{D})\hookrightarrow  M_{2^{\infty}}. 
\end{equation}

 \bigskip
 On the other hand, the UHF-algebra $M_{2^{\infty}}$ is known in quantum statistical mechanics 
 as a CAR (Canonical Anticommutation Relations) algebra,  which  plays an
  outstanding r\^ole in the theory of subfactors [Jones 1991] \cite[Section 5.6]{J1}.  
  In this note we use (\ref{eq1.6}) and geometry  of
 $\mathbb{A}(\mathscr{D})$   to give a new shorter  proof of  the Jones Index Theorem: 
\begin{theorem}\label{thm1.1}
 There is a subfactor $\mathscr{N}$ of the 
hyperfinite $II_1$ factor $\mathscr{M}$ only if 
$[\mathscr{M}:\mathscr{N}]  ~\in  ~[4, \infty) ~\bigcup  ~\{ 4\cos^2\left({\pi\over n}\right) ~|~n\ge 3\}$. 
\end{theorem}
The article is organized as follows. Section 2 contains a brief review
of preliminary results.  Theorem \ref{thm1.1} is proved  in Section 3.

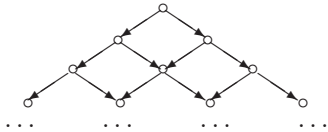
\begin{figure}
\begin{picture}(100,100)(0,150)

\put(50,200){\circle{3}}

\put(33,188){\circle{3}}
\put(67,188){\circle{3}}

\put(50,177){\circle{3}}
\put(16,177){\circle{3}}
\put(84,177){\circle{3}}

\put(-1,164){\circle{3}}
\put(34,164){\circle{3}}
\put(68,164){\circle{3}}
\put(103,164){\circle{3}}


\put(49,199){\vector(-3,-2){15}}
\put(51,199){\vector(3,-2){15}}

\put(32,187){\vector(-3,-2){15}}
\put(34,187){\vector(3,-2){15}}

\put(66,187){\vector(-3,-2){15}}
\put(68,187){\vector(3,-2){15}}


\put(14,175){\vector(-3,-2){15}}
\put(17,175){\vector(3,-2){15}}

\put(49,175){\vector(-3,-2){15}}
\put(51,175){\vector(3,-2){15}}

\put(83,175){\vector(-3,-2){15}}
\put(86,175){\vector(3,-2){15}}

\put(-10,155){$\dots$}
\put(27,155){$\dots$}
\put(64,155){$\dots$}
\put(101,155){$\dots$}

\end{picture}
\caption{Bratteli diagram of the cluster $C^*$-algebra $\mathbb{A}(\mathscr{D})$.} 
\end{figure}

\section{Preliminaries}
\subsection{Cluster algebras of rank 2}
Let $x_1$ and $x_2$ be independent variables of a cluster algebra.
For a pair of positive integers $b$ and $c$,  we define elements
$x_i$   by the exchange relations
\begin{equation}\label{eq:clusterrelations}
x_{i-1} x_{i+1} =
\left\{
\begin{array}[h]{ll}
1+x_i^b &  \mbox{if} \quad \mbox{$i$ odd,} \\
1+x_i^c  & \mbox{if}  \quad \mbox{$i$ even.}
\end{array}
\right.
\end{equation}
By a cluster algebra  rank 2 we denote  the algebra 
$\mathscr{A}(b,c)$ generated by the cluster variables $x_i$
[Sherman \& Zelevinsky 2004]   \cite[Section 2]{SheZe1}. 
Let  $\mathcal{B}$ be a basis of the algebra $\mathscr{A}(b,c)$.
\begin{theorem}\label{thm2.1}
{\bf  (\cite[Theorem 2.8]{SheZe1})}
Suppose that  $b=c=2$ or $b=1$ and  $c=4$. Then  
$\mathcal{B}=\{x_i^px_{i+1}^q ~|~ p,q\ge 0\} ~\bigcup ~\{T_n(x_1x_4-x_2x_3) ~|~ n\ge 1\}$,
where $T_n(x)$ are the Chebyshev polynomials of the first kind.  
\end{theorem}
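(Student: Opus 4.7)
The plan is to prove the statement in four steps. First, I would introduce the distinguished element $z := x_1x_4 - x_2x_3$ and verify that it is a Casimir element of $\mathscr{A}(b,c)$, i.e. that it is invariant under the shift of indices in (\ref{eq:clusterrelations}). In the cases $(b,c)=(2,2)$ and $(b,c)=(1,4)$ one has $bc=4$, so the cluster algebra is of affine type ($\widetilde{A}_1$ and $A_2^{(2)}$ respectively) and a direct computation from the exchange relations shows that $x_{i-1}x_{i+2} - x_i x_{i+1}$ is independent of $i$, hence defines a canonical element $z\in\mathscr{A}(b,c)$. This invariance is precisely what distinguishes the affine cases from the wild ones $bc\ge 5$.

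Next, I would check that $\mathcal{B}\subset\mathscr{A}(b,c)$. The cluster monomials $x_i^px_{i+1}^q$ lie in the algebra by definition. For the Chebyshev polynomials, I would use the recurrence $T_0(z)=1$, $T_1(z)=z/2$ (or the normalization from \cite{SheZe1}), and $T_{n+1}(z) = zT_n(z) - T_{n-1}(z)$; together with $z\in\mathscr{A}(b,c)$ this gives $T_n(z)\in\mathscr{A}(b,c)$ for every $n\ge 1$ by induction.

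Linear independence of $\mathcal{B}$ would then follow from the Laurent phenomenon. I would fix the initial cluster $(x_1,x_2)$ and compare leading terms of the Newton polytopes in $\mathbf{Z}^2$: each cluster monomial $x_i^px_{i+1}^q$ expands as a Laurent polynomial whose Newton polytope has a distinguished vertex determining a unique lattice point, while the elements $T_n(z)$ contribute a monomial in a direction orthogonal to the cone of cluster monomials. Since these extremal lattice points are pairwise distinct, no nontrivial $\mathbf{Z}$-linear relation can exist.

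The main obstacle, and the heart of the proof, is showing that $\mathcal{B}$ \emph{spans} $\mathscr{A}(b,c)$. My approach would be induction on a suitable notion of size, namely the Newton polytope of the Laurent expansion in the initial cluster $(x_1,x_2)$. Given an arbitrary element $f\in\mathscr{A}(b,c)$, I would locate an extremal monomial of its Laurent expansion and subtract a scalar multiple of either a single cluster monomial $x_i^px_{i+1}^q$ (for a suitable $i$) or of some $T_n(z)$, chosen so that the extremal vertex cancels. One then verifies that the remainder still lies in $\mathscr{A}(b,c)$ and has a strictly smaller Newton polytope; termination is guaranteed because $\mathscr{A}(b,c)$ carries a $\mathbf{Z}$-grading (from total degree in $x_1,x_2$) whose graded pieces are finite-dimensional. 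The delicate point is that the correct subtraction exists at every step \emph{only} when $bc=4$: outside the affine cases one must add further ``imaginary'' basis elements beyond the Chebyshev family, and it is precisely the affine rigidity of $z$ that closes the induction for $(b,c)\in\{(2,2),(1,4)\}$.
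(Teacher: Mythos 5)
A preliminary but important remark: the paper does not prove this statement at all --- it is quoted verbatim, with attribution, as Theorem~2.8 of \cite{SheZe1}, so there is no internal proof to compare yours against. Judged as a reconstruction of the Sherman--Zelevinsky argument, your outline has the right overall shape: the shift-invariance of $z=x_1x_4-x_2x_3$ (which indeed holds for $bc=4$; for $(b,c)=(2,2)$ one computes $z=(x_i^2+1+x_{i+1}^2)/(x_ix_{i+1})$ for every $i$), closure of the Chebyshev family under the three-term recurrence, and a leading-term induction on Laurent expansions in the initial cluster. But several load-bearing steps are wrong as stated or missing.

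Concretely: (i) with the recurrence $T_{n+1}(z)=zT_n(z)-T_{n-1}(z)$ you cannot take $T_1(z)=z/2$; that normalization produces non-integral coefficients, so the $T_n(z)$ would not even lie in the $\mathbf{Z}$-span you are trying to exhibit. The workable normalization is $T_1(z)=z$ with $T_n(t+t^{-1})=t^n+t^{-n}$ (compare the paper's (\ref{eq21})). (ii) The picture ``$T_n(z)$ contributes a monomial orthogonal to the cone of cluster monomials'' is not what happens. In the initial cluster the denominator vector of $T_n(z)$ is $n\delta$ with $\delta=(1,1)$, i.e.\ it lies on the single ray inside the positive quadrant that the denominator vectors of cluster monomials \emph{miss}; it is the pairwise distinctness of denominator vectors across all of $\mathcal{B}$ that yields both linear independence and the bookkeeping for spanning. (iii) Your termination argument fails: the total-degree grading of $\mathbf{Z}[x_1^{\pm1},x_2^{\pm1}]$ does not have finite-dimensional graded pieces (all $x_1^kx_2^{-k}$ sit in degree $0$), so it cannot drive the induction. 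What actually terminates the subtraction process is the finiteness of the support of a fixed Laurent polynomial together with an ordering on denominator vectors. (iv) Finally, the full strength of \cite[Theorem 2.8]{SheZe1} characterizes $\mathcal{B}$ as the \emph{canonical} basis via positivity and indecomposability of Laurent expansions, which your sketch does not touch; if you only claim the weaker statement that $\mathcal{B}$ is a $\mathbf{Z}$-basis, your plan is salvageable, but steps (ii) and (iii) must be replaced by the denominator-vector analysis above.
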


\bigskip
Let $r<R$ and consider an annulus $\mathscr{D}$ of the form (\ref{eq1.3}) having
one marked point on each boundary component. 
The cluster algebra $\mathscr{A}(b,c)$  associated to an ideal  triangulation 
 of $\mathscr{D}$ is given by the matrix (\ref{eq1.4})  [Fomin,  Shapiro  \& Thurston  2008]  \cite[Example 4.4]{FoShaThu1}.  
The exchange relations in this case can be written as $x_{i-1}x_{i+1}=1+x_i^2$
and  $B'=-B$.  Comparing with  the relations (\ref{eq:clusterrelations}),  we conclude that the 
$\mathscr{A}(b,c)$ is a cluster algebra of rank 2 with $b=c=2$. 
Therefore the basis $\mathcal{B}$ of the cluster algebra $\mathscr{A}(b,c)$ is 
described by Theorem \ref{thm2.1}. 
On the other hand, the cluster algebra  $\mathscr{A}(2,2)$ is known to 
encode  the Penner coordinates on the Teichm\"uller  space 
$T_\mathscr{D}=\{t\in \mathbf{R} ~|~ t>1\}$ of the annulus $\mathscr{D}$
 [Williams 2014] \cite[Section 3]{Wil1}.

Let  $\mathbb{A}(2, 2)$ be an AF-algebra, such that 
$K_0(\mathbb{A}(2, 2))\cong \mathscr{A}(2, 2)$.
The Bratteli diagram of the  cluster $C^*$-algebra $\mathbb{A}(2, 2)$ 
has the form of a  Pascal triangle  shown in 
Figure 1 \cite[Section 4.4]{Nik1}.   Thus  $\mathbb{A}(2, 2)$ is 
a   GICAR  algebra  [Effros 1980]  \cite[p. ~13(e)]{E}.  
Consider a group of the modular automorphisms 
\begin{equation}
\sigma_t: \mathbb{A}(2, 2)\to \mathbb{A}(2, 2)
\end{equation}
constructed in  \cite[Section 4]{Nik1}. Such a group  is  generated by the  geodesic flow on 
 the Teichm\"uller space  $T_{\mathscr{D}}$, {\it ibid.}

\subsection{Powers state}
Let $M_{2^{\infty}}=\bigotimes_{i=1}^{\infty} M_2(\mathbf{C})$  be the GICAR algebra
[Davidson 1996]\cite[Example III.5.5]{D} and  [Effros 1980]  \cite[p.~13(c1)]{E}.  
For $0<\lambda <1$  and $x_i \in M_2(\mathbf{C}) $  consider the Powers state $\varphi_{\lambda}$
on  the tensor product $M_{2^{\infty}}$ given by the formula:
\begin{equation}
\varphi_{\lambda}(x_1\otimes\dots\otimes x_n\otimes 1\otimes\dots)
=\prod_{i=1}^n Tr \left({1\over 1+\lambda}
\left(
\begin{matrix} 1 & 0\cr 0 & \lambda \end{matrix} 
\right) x_i
\right).
 \end{equation}
Applying the GNS construction  to the pair $(M_{2^{\infty}},\varphi_{\lambda})$
one gets a factor $R_{\lambda}$.  The product 
$\left\{\bigotimes_{i=1}^{\infty}\exp \left(\sqrt{-1}\left(\begin{matrix}1 & 0\cr 0 & \lambda\end{matrix}\right)\right)
~|~ 0<\lambda < 1\right\}$
gives rise to a group  of  the  modular automorphisms of $R_{\lambda}$,  
see e.g. [Jones 1991] \cite[Section 1.10]{J1}.

\medskip
The GICAR algebra $\mathbb{A}(2, 2)$  embeds  into the factor $R_{\lambda}$
[Davidson 1996]  \cite[Example III.5.5]{D}.   Moreover,  a restriction of the modular 
 automorphisms of $R_{\lambda}$ coincides with the $\sigma_t: \mathbb{A}(2, 2)\to \mathbb{A}(2, 2)$
 constructed in   \cite[Section 4]{Nik1}.

\subsection{Basic construction}
 Denote by $e_{ij}$ the matrix units of the algebra $M_2(\mathbf{C})$. 
Then 
$e_t={1\over 1+t} (e_{11}\otimes e_{11}+te_{22}\otimes e_{22}+\sqrt{t}
(e_{12}\otimes e_{21}+e_{21}\otimes e_{12}))$
is a projection  of the algebra $M_2(\mathbf{C})\otimes M_2(\mathbf{C})$
for each  $t\in\mathbf{R}$.  Proceeding by induction, one can define projections
$e_i(t)=\theta^i(e_t)\in M_{2^i}$, where $\theta$ is the shift automorphism of the 
$UHF$-algebra $M_{2^{\infty}}$ and $M_{2^i}$ is the $i$-th element of $\bigotimes_{i=1}^{\infty} M_2(\mathbf{C})$.  The  $e_i:=e_i(t)$  satisfy the following relations
\begin{equation}\label{eq1.8}
\left\{
\begin{array}{ccc}
e_i e_j &=& e_j e_i, \quad\hbox{if} \quad |i-j|\ge 2\\
e_i e_{i\pm 1} e_i &=&{t\over (1+t)^2} e_i,
\end{array}
\right.
\end{equation}
so that $Tr~(xe_{n+1})= [\mathscr{M}:\mathscr{N}]^{-1} ~Tr ~(x)$
[Jones 1991]  \cite[Section 5.6]{J1}. 
The $e_i(t)$ generate 
 a subfactor $\mathscr{N}$
 of the type II von Neumann algebra $\mathscr{M}$,  such that  
 \begin{equation}\label{eq1.9}
 [\mathscr{M}:\mathscr{N}]^{-1}={t\over (1+t)^2}.
\end{equation}

\section{Proof of Theorem \ref{thm1.1}}
We shall use  a simple  analysis  of the cluster algebra  $\mathscr{A}(\mathscr{D})\cong K_0(\mathbb{A}(\mathscr{D}))$
using the Sherman-Zelevinsky Theorem.  
Namely,  such an algebra has a canonical basis
of the form
\begin{equation}\label{eq3.1}
\mathcal{B}=\{x_i^px_{i+1}^q ~|~ p,q\ge 0\} ~\bigcup ~\{T_n(x_1x_4-x_2x_3) ~|~ n\ge 1\},
\end{equation}
where $T_n(x)$ are the Chebyshev polynomials of the first kind,
see Theorem \ref{thm2.1}. 
We  split the proof in two lemmas corresponding (roughly) to the cases
 $|\mathcal{B}|=\infty$ and  $|\mathcal{B}|<\infty$, respectively.

\bigskip
\begin{lemma}\label{lem3.1}
There exists a subfactor $\mathscr{N}$ of the hyperfinite type $II_1$ factor $\mathscr{M}$
whenever
$[\mathscr{M}:\mathscr{N}]\in (4, \infty)$.  
\end{lemma}
\begin{proof}
(i)  Let us return to the inclusion (\ref{eq1.6}) and consider the Powers state $\varphi_{\lambda}$
on $M_{2^{\infty}}$.  The Powers modular automorphism of the factor $R_{\lambda}$ 
induces a modular automorphism $\sigma_t: ~\mathbb{A}(\mathscr{D})\to \mathbb{A}(\mathscr{D})$. 
The Penner coordinate $t=R/r>1$ on $T_{\mathscr{D}}$ and 
the Powers  parameter $0<\lambda<1$ are related by the formula:
\begin{equation}\label{scale}
t={1\over 2}\left(\lambda+{1\over \lambda}\right). 
\end{equation}

In other words, the Penner coordinates give the Powers states, i.e.
for each $t>1$ the evaluation map produces a positive homomorphism
of $K_0(\mathbb{A}(\mathscr{D}))$ to $\mathbf{R}$, which correlates 
with a trace on the GICAR algebra $\sigma_t(\mathbb{A}(\mathscr{D}))$.

\bigskip
(ii) If $|\mathcal{B}|=\infty$, then the Bratteli diagram of  $\mathbb{A}(\mathscr{D})$
(Figure 1) is an infinite tower.  The hyperfinite type $II_1$ factor $\mathscr{M}$ 
is obtained from a factor $\mathscr{N}$ by adjoining the Jones projections $e_i(t)$
using the basic construction (Section 2.3).  The Penner coordinate $t>1$ on
$T_{\mathscr{D}}$ corresponds to the values of  index $[\mathscr{M}:\mathscr{N}]={(1+t)^2\over t}>4$
in view of formula  (\ref{eq1.9}).   In other words, $[\mathscr{M}:\mathscr{N}]\in (4, \infty)$. 
Lemma \ref{lem3.1} is proved. 
\end{proof}

\bigskip
\begin{lemma}\label{lem3.2}
There exists a subfactor $\mathscr{N}$ of the hyperfinite type $II_1$ factor $\mathscr{M}$
whenever $[\mathscr{M}:\mathscr{N}]\in \{ 4\cos^2\left({\pi\over n}\right) ~|~n\ge 3\}\cup \{4\}$. 
\end{lemma}
\begin{proof}
(i) Recall that the Chebyshev polynomials satisfy the following relations:
\begin{equation}\label{eq21}
T_0 =1 \quad\hbox{and}  \quad  T_n \left[{1\over 2}(t+t^{-1})\right]={1\over 2}(t^n+t^{-n}). 
\end{equation}

In view of \ref{thm2.1}, we choose
${1\over 2}(t+t^{-1})=x_1x_4-x_2x_3$.
(Such a parametrization  is always possible since the Penner coordinates 
[Williams 2014]  \cite[Section 3.2]{Wil1}
on $T_\mathscr{D}$
are given by the cluster $(x_1,x_2)$,  where each $x_i$   is a function of   $t$.)

The  exchange relations (\ref{eq1.2}) for   $\mathscr{A}(\mathscr{D})$  
can be written as $x_{i-1}x_{i+1}=x_i^2+1$. 
It is easy  to calculate that
$x_1x_4-x_2x_3={x_1^2+1+x_2^2\over x_1x_2}$.  
An explicit  resolution of cluster variables $x_1$ and $x_2$ is given 
by the formulas:
\begin{equation}\label{eq28}
\left\{
\begin{array}{lll}
x_1&=& {\sqrt{2}\over 2}\sqrt{t^2+t\sqrt{t^2-16}}   \\
x_2 &=& {\sqrt{2}\over 2}\sqrt{t^2-t\sqrt{t^2-16}}    
\end{array}
\right.
\end{equation}
The reader can verify,  that equations (\ref{eq28})  imply $x_1x_4-x_2x_3= {1\over 2}(t+t^{-1})$.
The parametrization of the ordered $K_0$-group of the GICAR algebra  $\mathbb{A}(\mathscr{D})$ 
in this case differs from (\ref{scale})  in the sense that $t$ is allowed to be a complex number. 
As we shall see, such an extension does not affect  the property of the index to be a real number. 
The compatibility of traces under the embedding (\ref{eq1.6}) is preserved.

\bigskip
(ii)   If $|\mathcal{B}|<\infty$, then the Bratteli diagram of  $\mathbb{A}(\mathscr{D})$
(Figure 1) is a finite tower.  
In particular,    the formulas (\ref{eq3.1}) and  (\ref{eq21}) imply
\begin{equation}\label{eq29}
T_n(x_1x_4-x_2x_3)=T_0=1
\end{equation}
for some integer $n\ge 1$.   But  $x_1x_4-x_2x_3= {1\over 2}(t+t^{-1})$
and using formula (\ref{eq21})  for the Chebyshev polynomials, one gets
an equation
\begin{equation}\label{eq30}
t^n+t^{-n}=2
\end{equation}
for (possibly complex)  values of $t$.  Since  (\ref{eq30}) 
is equivalent to the equation $t^{2n}-2t^n+1=(t^n-1)^2=0$,  one gets the 
$n$-th root of unity
\begin{equation}\label{eq31}
t\in \{e^{2\pi i\over n} ~|~ n\ge 1\}. 
\end{equation}
The value
\begin{equation}\label{eq32}
 [\mathscr{M}:\mathscr{N}]={(1+t)^2\over t}={1\over t}+2+t=2\left[\cos~\left({2\pi\over n}\right)+1\right]=
4\cos^2\left({\pi\over n}\right)
\end{equation}
is a real number.  We must exclude the case $n=2$ corresponding to the value $t=-1$,  because 
otherwise one gets a division by zero in   (\ref{eq1.8}).
  Lemma \ref{lem3.2} is proved. 
\end{proof}

\bigskip
Theorem \ref{thm1.1} follows from  lemmas \ref{lem3.1}   and \ref{lem3.2}.

\bibliographystyle{amsplain}


\end{document}